\documentclass[twoside,11pt]{amsart}
\usepackage{amscd,amsfonts,amssymb}
\usepackage{graphicx}
\usepackage{color}

\textwidth=16cm \oddsidemargin=-0.0cm \evensidemargin=-0.0cm
\textheight=23.0cm \topmargin=-0.2cm

 \usepackage{epsfig}
\usepackage{float}

\newcommand{\R}{\mathbb R} 

\newcommand{\nwc}{\newcommand}
\nwc{\eps}{\varepsilon} \nwc{\bc}{\begin{center}}
\nwc{\ec}{\end{center}} \nwc{\bi}{\begin{itemize}}
\nwc{\ei}{\end{itemize}} \nwc{\disp}{\displaystyle}
\nwc{\pb}{\medskip\noindent} \nwc{\bt}{\begin{tabbing}}
\nwc{\et}{\end{tabbing}} \nwc{\be}{\begin{eqnarray*}}
\nwc{\ee}{\end{eqnarray*}} \nwc{\ba}{\begin{align}}
\nwc{\sleq}{\leqslant} \nwc{\bu}{\bar{u}} \nwc{\mF}{\mathbb{F}}

\def\be{\beta}
\def\Del{\Delta}

\def\Lam{\Lambda}
\def\lam{\lambda}


\newtheorem{thm}{Theorem}[section]
\newtheorem{lem}{Lemma}[section]

\theoremstyle{definition}

\theoremstyle{remark}


\numberwithin{equation}{section}

\author{Hailiang Liu}
\address{Department of Mathematics\\Iowa State University\\Ames, IA 50011.
} \curraddr{} \email{hliu@iastate.edu, Fax:(515)294-5454}

\author{Eitan Tadmor}
\address{Department of Mathematics\\
Center of Scientific Computation And Mathematical Modeling (CSCAMM)\\
and Institute for Physical Science and Technology (IPST)\\
University of Maryland\\
College Park, MD 20742.} \curraddr{}
\email{tadmor@cscamm.umd.edu}

\author{Dongming Wei}
\address{Department of Mathematics\\
University of Wisconsin-Madison\\
Madison, WI 53706} \curraddr{}
\email{dwei@math.wisc.edu}

\title[Global Regularity of the 4D Restricted Euler Equations]{Global Regularity of the 4D
Restricted Euler Equations}
\thanks{Work partially supported
by the NSF under grant \#DMS05-05975(H. Liu) and \#DMS07-07949 (E. Tadmor
and D. Wei).}
\date{\today}

\begin{document}

\begin{abstract}
We are concerned with the critical threshold phenomena in
the Restricted Euler (RE) equations. Using the spectral  and
trace dynamics we  identify  the critical thresholds for 3D and
the 4D restricted Euler equations. It is well known that the 3D RE
solutions blow up. Projected on the 3-sphere, the set of initial
eigenvalues which give rise to bounded stable solutions is reduced to a single point, which confirms that 3D RE blowup is generic. In contrast, we identify a surprisingly {\it rich} set of the initial spectrum on the 4-sphere which yields global smooth solutions; thus, 4D regularity is generic.
\end{abstract}

\subjclass{35P15, 35B35, 35Q35, 76B03.}
\keywords{Spectral dynamics, finite time breakdown, restricted  Euler equations,
critical threshold, three- and four-dimensional equations, global regularity.}

\maketitle

\centerline{\large\sf To Katepalli Sreenivasan on his 60th birthday, with friendship and admiration}

\section{Restricted Euler Equations and Spectral Dynamics}
We are concerned  with the questions of global regularity vs. finite-time breakdown of Eulerian flows governed by
$$
\partial_t u +u\cdot \nabla_x u= F, \quad x\in \R^n, \quad t>0.
$$
Here $u$ is the velocity field, $u:=(u_1,u_2,\ldots,u_n)^\top: \R^{1+n} \mapsto \R^n$, and its global  behavior is dictated by the different models of the forcing $F=F(u, \nabla u,\ldots)$.
For forcing involving viscosity and pressure, we meet the well
known Navier-Stokes (NS) equations,
\begin{equation}\label{2.3}
\partial_t u +u\cdot \nabla_x u= \nu\Delta u - \nabla p, \quad x\in \R^n, \quad t>0,
\end{equation}
augmented with the incompressibility condition, $\nabla\cdot u=0$ and subject to prescribed initial conditions
$u(x, 0)=u_0(x)$. In many applications, $\nu>0$ is  sufficiently small
 so that one can anticipate the behavior of slightly viscous NS solutions to be described by the Euler equations with   $\nu=0$ in (\ref{2.3}), at least for flows occupying the whole space so that the important effects of boundary layers can be ignored.

The velocity gradient  of the incompressible Euler
equations, $M:=\nabla_x u$ then solves
\begin{equation}\label{2.6}
\partial_t M  +  u\cdot \nabla_x M  +  M^2 =  -(\nabla \otimes \nabla )p.
\end{equation}
Taking the trace of (\ref{2.6}) while noting that $M$ is trace-free, $tr M=\nabla \cdot u=0$, one finds
$tr M^2 = - \Del p$ which dictates the pressure as $p =-\Del ^{-1} (tr M^2)$. The second term in
(\ref{2.6}) therefore amounts to the $n\times n$ time-dependent matrix
$$
 (\nabla \otimes \nabla )\Del ^{-1} (tr M^2)= R[trM^2].$$
Here $R[w]$ denotes the so called \emph{Riesz matrix} --- an $n\times n$ matrix whose entries,
$(R[w])_{jk}:=R_jR_k(w)$, involve the Riesz transforms $R_j$,
$R_j=-(-\Delta)^{-1/2}\partial_j$, i.e.,
$$
R[w]:= \left\{R_j R_k(w)\right\}_{j,k=1}^n, \quad \widehat{R_j(w)}(\xi)=-i\frac{\xi_j}{|\xi|}\hat {w}(\xi)\quad
{\rm for}\quad 1\leq j\leq n.
$$
This furnishes an equivalent, self-contained formulation of Euler equations, expressed in terms of the velocity gradient $M=\nabla_x u$, which is governed by,
\begin{align}\label{NSM}
\partial_t M  +  u\cdot \nabla_x M  +  M^2 = R[tr M^2], \quad M=\nabla_x u,
\end{align}
and subject to the trace-free initial data, $M(\cdot,0)=M_0$.
Observe that the invariance of incompressibility is already taken
into account in (\ref{NSM}) since $tr M^2 = tr R[tr M^2]$ implies $(\partial_t+u\cdot\nabla_x)tr M=0$ and hence
 $tr M=tr M_0=0$.

It is the \emph{global} nature  of the Riesz matrix,
$R[tr M^2]$, which makes the  issue of regularity for Euler and NS equations such an intricate question to solve, both analytically and numerically, \cite{Fef2000}. Various simplifications to this
pressure Hessian, $R[tr M^2]=-\nabla\otimes\nabla p$ were sought, e.g., \cite{Ler1934, Vie1982, Con1986, HMR1998,CFHOTW1999}.
In this paper we focus our attention on the so called
\emph{restricted Euler equations},  proposed in
\cite{Leo1975, Vie1982} as a \emph{localized} alternative  of the full Euler
equations (\ref{NSM}).  By the definition of the Riesz matrix, one has
$$
R[trM^2]=\nabla \otimes \nabla \Delta ^{-1}[trM^2]
  =\nabla \otimes \nabla  \int_{\R^n}K(x-y)(tr M^2)(y)dy,
$$
where  the kernel $K(\cdot)$  is given by
$$
K(x)=
  \begin{cases}
    {\displaystyle \frac{1}{2\pi}ln|x|} & \text{n=2}, \\
     {\displaystyle \frac{1}{(2-n)\omega_n|x|^{n-2}}} & \text{$n>2$},
  \end{cases}
$$
with $\omega_n$ denoting the surface area of the unit sphere in
$n$-dimensions. A direct computation yields
\begin{equation}\label{Rmat}
\partial_j\partial_k K*trM^2  =\frac{tr M^2}{n}\delta_{jk}
+ \int_{\R^n} \frac{|x-y|^2 \delta_{jk} - n(x_j -y_j)(x_k
-y_k)}{\omega_n |x-y|^{n+2}} tr M^2(y) dy.
\end{equation}

Ignoring the singular integrals on the right of (\ref{Rmat}), we are left with the
local part  of the Riesz matrix $R[tr M^2]$, given by $tr M^2 I_{n\times n}/n$.
We use this local term to approximate the  pressure Hessian in (\ref{NSM}).
The resulting Restricted Euler (RE) equations amount to
\begin{equation}\label{5.1}
\partial_t M  + u\cdot \nabla_x M +M^2= \frac{1}{n}tr M^2 I_{n\times n}.
\end{equation}
This is a matrix Ricatti equation for the $n\times n$ matrix $M$, which should mimic the dynamics of the
velocity gradient, $\nabla u$ in the full Euler equations. We observe that as in the full Euler equations,
incompressibility is  maintained in the restricted model,
since $tr M^2= tr[tr M^2 I_{n \times n}/n]$ implies $(\partial_t+u\cdot \nabla_x) tr
M=0$ and hence $tr M=tr M_0=0$. The 3D RE (\ref{5.1})  has attracted  great
attention since it was first introduced in \cite{Leo1975, Vie1982} as a local approximation to the full 3D Euler equations.
It can be used to understand the local topology of the Euler dynamics and to
capture certain statistical features of  physical turbulent flows, consult \cite{Can1992, CPS1999, Vie1982}.

What about the global regularity of the RE equations (\ref{5.1})?
the finite time breakdown of the three-dimensional restricted model
goes back to the original work of Viellefosse \cite{Vie1982}.
In \cite{LT2002a} we have shown that the 3D RE solutions break down at a finite time for \emph{all}
initial configurations $M_0$, except for the special case when $M_0$ has three real eigenvalues,
$$
\lambda_1(0)\leq \lambda_2(0)\leq \lambda_3(0), \quad \left\{\lambda_j(0)=\lambda_j(M_0)\right\}_{j=1}^3,
$$
which are aligned along the ray $(-r, -r, 2r), \ r\in\R^+$.
Thus, the finite time break down of the 3D RE equations is \emph{generic}.

In this paper we shall identify and compare between the restricted Euler equations
in 3D and 4D case, respectively.  To this end, we consider
a bounded, divergence-free, smooth vector field $u: \R^n \times
[0, T]\to \R^n$.  Let $x=x(\alpha, t)$ denote an orbit associated
to the flow by
$$
\frac{d x}{dt}=u(x, t), \quad 0<t<T, \quad x(\alpha, 0)=\alpha \in
\R^n.
$$
Then along this orbit, the velocity gradient tensor of the
restricted Euler equations (\ref{5.1}) satisfies
$$
\frac{d }{dt}M + M^2= \frac{tr M^2}{n} I_{n\times n}, \quad \frac{d}{dt}:=\partial_t +u\cdot \nabla_x.
$$
By the spectral dynamics lemma 3.1 in \cite{LT2002a},  the
corresponding eigenvalues  of $M$ satisfy
\begin{equation}\label{5.2}
\frac{d }{dt} \lam_i  +  \lam_i^2 =  \frac{1}{n}\sum_{j=1}^n \lam_j^2, \quad
i=1,\cdots, n.
\end{equation}
This is a closed system for $\Lam(t)=(\lam_1(t), \lam_2(t),
\ldots, \lam_n(t))$, which serves as a simple approximation for
the evolution of the velocity gradient field.

For arbitrary $n\geq 3$,  we use the spectral dynamics of $M$ in
order to show the existence of a large set of initial configurations
leading to finite time breakdown of (\ref{5.2}), generalizing
the previous result of \cite{Vie1982}. The finite time breakdown of the $n$-dimensional RE equations (and the  precise topology of the  breakdown) was established in \cite{LT2002a} after we identified  a set of $[n/2]+1$ global spectral invariants, interesting for their own sake.  Yet, this does not exclude the possible existence of other generic  sets of initial data, for which global smooth solutions exist. The distinction between these two sets of initial conditions is identified by the so-called \emph{critical threshold} surfaces in configuration space:  finite time breakdown occurs for super-critical initial data on ``one side" of the  such critical threshold, while the set of \emph{sub-critical} initial data on the ``other side" of the threshold yields global smooth solutions.

An interesting question  therefore arises, namely, whether there exists a critical threshold for the 4D restricted Euler equation. This remarkable critical threshold phenomena was identified in \cite{ELT2001, TW2008} for a class of essentially 1D Euler-Poisson equations, and in \cite{LT2002b} for a
1D convolution model for nonlinear conservation laws. The 2D
critical threshold phenomena has been recently confirmed for a
restricted Euler-Poisson system \cite{LT2003} and a rotating Euler
equation \cite{LT2004, CT2008}. In all these cases, we identified large, generic sets
of sub-critical initial data, which evolve to global smooth solutions.
This is in contrast to the generic scenario of finite-time blows up in the 3D RE equations\footnote{We should emphasize that generic sub-critical data are \emph{not} limited to a perturbative statement of global existence for initial data in the local neighborhood of certain ``preferred configurations". Instead, the precise notion of ``generic" sub-critical sets,  quantified below and the  references mentioned above, makes it clear the critical threshold phenomena we seek describes a global scenario in configuration space.}.

In this paper we identify the exact critical thresholds for the 4D
restricted Eulerian (RE) equations and we conclude with the
surprising result that in the 4D case, the RE equations admit a
large, generic set of sub-critical initial data which give rise to
global smooth solutions.

A summary of our results is outlined below. We say that $\Lam_0 \in
\R^n$ is {\it sub-critical} if there exists a global
solution in time of (\ref{5.2}), subject to initial conditions,
$\Lam(0)=\Lam_0$. A first observation rests on the obvious
symmetries of (\ref{5.2}).

\begin{lem}\label{lem:scale}
If $\Lam$ is sub-critical then so is $r\Lam, \ \forall r>0$.
Moreover, $\Lam_\sigma=\{\lam_{\sigma(j)}, \ \forall \sigma\in
\pi_n\}$ is also subcritical.
\end{lem}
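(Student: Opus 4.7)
The plan is to exploit two symmetries of the ODE system (\ref{5.2}): a two-parameter dilation symmetry $(\lambda,t)\mapsto(r\lambda,rt)$, and the permutation symmetry of the right-hand side. Both are immediate by inspection, so the proof is a direct verification rather than an estimate.

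For the scaling claim, suppose $\Lam(t)=(\lam_1(t),\ldots,\lam_n(t))$ is a global smooth solution of (\ref{5.2}) with $\Lam(0)=\Lam_0$, and fix $r>0$. I would define the candidate solution
\[
\tilde\Lam(t):=r\,\Lam(rt),\qquad \tilde\lam_i(t):=r\,\lam_i(rt),
\]
and check that it satisfies (\ref{5.2}) with initial data $r\Lam_0$. By the chain rule,
\[
\frac{d}{dt}\tilde\lam_i(t)=r^2\lam_i'(rt)=r^2\Bigl(\tfrac{1}{n}\sum_j\lam_j^2(rt)-\lam_i^2(rt)\Bigr)
=\tfrac{1}{n}\sum_j\tilde\lam_j^2(t)-\tilde\lam_i^2(t),
\]
using (\ref{5.2}) at the rescaled time $rt$ in the middle step. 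Since $r>0$, the map $t\mapsto rt$ is an order-preserving bijection of $[0,\infty)$ onto itself, so the global existence of $\Lam$ transfers to $\tilde\Lam$. By uniqueness of ODE solutions, $\tilde\Lam$ is the unique trajectory emanating from $r\Lam_0$, which therefore is sub-critical.

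For the permutation claim, the key observation is that $\tfrac{1}{n}\sum_{j=1}^n\lam_j^2$ is a symmetric function of the components of $\Lam$, so it is unchanged when we reorder the $\lam_j$'s. Hence for any $\sigma\in\pi_n$, if $\Lam(t)$ solves (\ref{5.2}), then the reordered trajectory $\Lam_\sigma(t):=(\lam_{\sigma(1)}(t),\ldots,\lam_{\sigma(n)}(t))$ satisfies the $\sigma(i)$-th equation of the original system in the $i$-th slot, which is again of the form (\ref{5.2}). Its initial datum is precisely $\Lam_\sigma(0)$, and it is defined on the same time interval as $\Lam$, so it is global whenever $\Lam$ is.

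There is essentially no obstacle here; the lemma is a structural observation recording the two symmetries that will later be used to reduce the search for the critical threshold surface to a lower-dimensional set (e.g.\ the projective class on the $n$-sphere modulo the symmetric group), which is the setup anticipated by the abstract.
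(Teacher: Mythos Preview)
Your proof is correct and follows exactly the paper's approach: the paper also notes that $r\Lam(rt)$ is the global solution corresponding to $r\Lam_0$ and that the system (\ref{5.2}) is invariant under permutations, without giving further detail. Your version simply spells out the chain-rule verification and the symmetry of $\tfrac{1}{n}\sum_j\lam_j^2$ that the paper leaves implicit.
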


For the proof we note that if $\Lam(t)$ is the global solution
corresponding to $\Lam_0$, then $r\Lam(rt), \ r>0$ is the global
solution corresponding to $r\Lam_0$. Also, equations (\ref{5.2})
remain invariant under arbitrary permutation $\sigma$ which amounts
to reordering, exchanging  the $\lam_j$-equation with
$\lam_{\sigma(j)}$-equation. It follows that the set of sub-critical
initial data consists of rays, and therefore, it is enough to
consider the projection of this set on the unit sphere. In fact, we
can restrict attention to an orthant of any convex set containing the
origin. In this context we have

\begin{thm}\label{thm:3DRE}
Solutions to (\ref{5.2}) with $n=3$ remain bounded for all time if and
only if the initial data $\Lambda_0:=(\lambda_{10}, \lambda_{20},
\lambda_{30})$ lie in the following set
$$
r\{ (-1, -1, 2)_\sigma \}.
$$
\end{thm}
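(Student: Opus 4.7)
The plan is to reduce (\ref{5.2}) with $n=3$ to a planar phase-plane problem via the trace-free constraint, and then classify bounded orbits. Since $\sum_i \lambda_i = 0$ is preserved, introduce the two elementary invariants
$$Q := -\tfrac{1}{2}\sum_{i=1}^3 \lambda_i^2, \qquad R := -\lambda_1\lambda_2\lambda_3,$$
which, in view of the trace constraint, are the non-trivial coefficients of the characteristic polynomial $\lambda^3+Q\lambda+R$. A direct computation from (\ref{5.2}), using Newton's identity $\sum_i \lambda_i^4 = \tfrac{1}{2}\bigl(\sum_i \lambda_i^2\bigr)^2$ (valid when $\sum_i \lambda_i = 0$), yields the classical Viellefosse planar system
$$\dot Q = -3R, \qquad \dot R = \tfrac{2}{3} Q^2,$$
from which one verifies immediately that $D := \tfrac{27}{4} R^2 + Q^3$ is conserved. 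Up to sign and normalization, $D$ is the discriminant of the characteristic polynomial, so for real initial eigenvalues $D_0 \le 0$.

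Next I would integrate on the level sets $D \equiv D_0$. For $D_0 < 0$ one has $Q = (D_0 - \tfrac{27}{4} R^2)^{1/3} < 0$ throughout, hence $\dot R = \tfrac{2}{3}Q^2 > 0$ is strictly positive and $R$ is strictly increasing; the separating integral
$$t - t_0 = \int_{R_0}^{R(t)} \frac{3\, dR}{2\bigl(D_0 - \tfrac{27}{4} R^2\bigr)^{2/3}}$$
converges at $R = +\infty$ (integrand is $O(R^{-4/3})$), so $R$ reaches $+\infty$ in finite time and, because $\max_i |\lambda_i| \ge |R|^{1/3}$, the eigenvalues blow up. On the critical locus $D_0 = 0$ the relation $Q = -(\tfrac{27}{4})^{1/3}|R|^{2/3}$ reduces the system to the scalar ODE $\dot R = c\, |R|^{4/3}$ with $c = \tfrac{2}{3}(\tfrac{27}{4})^{2/3}>0$, whose explicit separable solution $|R(t)|^{-1/3} = |R_0|^{-1/3} \mp ct/3$ blows up in finite time when $R_0 > 0$, but exists for all $t>0$ and satisfies $R(t) \uparrow 0$, $Q(t) \uparrow 0$, $\Lambda(t) \to 0$ when $R_0 < 0$. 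The degenerate case $R_0 = D_0 = 0$ is the fixed point $\Lambda \equiv 0$.

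Finally I would translate the phase-plane conclusion back into eigenvalue data. The condition $D_0 = 0$ is precisely the statement that the characteristic polynomial has a repeated root, which together with $\sum_i \lambda_i = 0$ forces the multiset $\{\lambda_{10}, \lambda_{20}, \lambda_{30}\} = \{a, a, -2a\}$ for some $a \in \R$; the corresponding $R_0 = 2a^3$ is $\le 0$ iff $a \le 0$. Writing $a = -r$ with $r \ge 0$ produces exactly the ray $r(-1, -1, 2)$, and Lemma \ref{lem:scale} generates all reorderings, giving the set $r\{(-1, -1, 2)_\sigma\}$ as claimed. The main obstacle is the delicate case $D_0 = 0$: geometrically, both halves $R_0 > 0$ and $R_0 < 0$ correspond to the same family of repeated-eigenvalue configurations $(a, a, -2a)$, yet only one sign yields a global-in-time solution. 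Separating the two halves is not a soft argument — it rests on the explicit time integration of $\dot R = c|R|^{4/3}$ above, which distinguishes finite-time blow-up from infinite-time decay to the origin and thereby isolates the specific ray $r(-1,-1,2)$ rather than its reflection.
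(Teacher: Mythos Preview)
Your reduction to the planar $(Q,R)$ system is, up to the linear change of variables $Q=-m_2/2$, $R=-m_3/3$, exactly the paper's trace dynamics $\dot m_2=-2m_3$, $\dot m_3=-\tfrac12 m_2^2$; your conserved quantity $D=\tfrac{27}{4}R^2+Q^3$ is a constant multiple of the paper's invariant $6m_3^2-m_2^3$, and your separation of the two halves of the separatrix $D_0=0$ via the explicit integration of $\dot R=c|R|^{4/3}$ matches the paper's selection of the branch $m_3=+\tfrac{1}{\sqrt6}m_2^{3/2}$. The discriminant interpretation of $D$ is a nice touch the paper does not make explicit, and it lets you read off the repeated-root form $(a,a,-2a)$ directly rather than solving an algebraic system on a fixed sphere.

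There is, however, one genuine omission. You only treat $D_0<0$ and $D_0=0$, remarking that real initial eigenvalues force $D_0\le0$; but the theorem must also exclude initial data with a complex-conjugate pair, for which the discriminant is strictly negative and hence $D_0>0$. The paper handles this case (in its $(m_2,m_3)$ variables the blow-up argument does not care about the sign of the invariant, and it then checks explicitly that no complex configuration lands on the critical curve). In your framework the fix is short: on a level set $D_0>0$ one still has $\dot R=\tfrac23 Q^2\ge0$, the two points where $Q=0$ are crossed in finite time since the singularity in $\int dR/Q^2$ there is of order $|R-R_*|^{-2/3}$ and hence integrable, and for large $R$ the integrand is again $O(R^{-4/3})$, giving finite-time blow-up of $R$. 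Without this paragraph the ``only if'' direction is incomplete.
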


Restricted to one orthant of the unit sphere, we thus find that the
3D RE equations admit only one sub-critical point. In this sense,
the finite-time breakdown of 3D RE is generic.  This result was
already obtained in \cite{LT2002a} by spectral dynamics analysis. In \S
 4 we present an alternative, equivalent argument based on trace dynamics of $tr(M^k)$, $1\leq k\leq n$,  which paves the way for identifying our 4D critical threshold surface in \S 5.

In contrast to this generic 3D finite-time breakdown, the 4D RE equations admit a large class of global smooth solutions. Our 4D results are summarized below.

\begin{thm}\label{thm:4DRE-}
Solutions to (\ref{5.2}) with $n=4$ remain bounded for all time if
and only if the initial data $\Lambda_0:=(\lam_{10}, \lam_{20},
\lam_{30}, \lam_{40})$ with $\sum_{j=1}^4 \lam_{j0}=0$, up to a permutation, lie in one of the
following sets

\{i\}Two pairs of arbitrary complex eigenvalues,
$$
\Lam_0 \in \{(a+ib, a-ib, -a+ic, -a-ic), \quad  bc \neq 0 \}.
$$

\{ii\}One pair of complex eigenvalue with two equal
real eigenvalues
$$
\Lam_0 \in \{(a+ib, a-ib, -a, -a), \quad  b \not=0 \}.
$$

\{iii\} Real eigenvalues,
$$
\Lam_0 \in \{(a+b, a-b, -a, -a), \quad b \in
[-2a, 2a], \; a\geq 0\}.$$

\end{thm}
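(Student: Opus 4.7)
The plan is to exploit a pair-splitting of the spectrum that is preserved by the flow and that diagonalizes the non-trivial spectral invariants identified in Section~4. Using $\sum_{j=1}^4\lam_j=0$, group the four eigenvalues into two pairs and write $\lam_{1,2}=s\pm q$, $\lam_{3,4}=-s\pm r$, where $q,r$ may be real or purely imaginary depending on the complex structure of $\Lam$. Substitution into (\ref{5.2}) with $n=4$ produces the reduced three-variable system
\[
\dot s=\tfrac{1}{2}(r^2-q^2),\qquad \dot q=-2sq,\qquad \dot r=2sr.
\]
From the last two equations $K:=qr$ is immediately conserved, and a direct check yields the second conservation law $E:=q^2+r^2-4s^2$. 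Eliminating $q^2,r^2$ reduces the full dynamics to the one-dimensional energy relation
\[
4\dot s^{\,2}=(4s^2+E)^2-4K^2,
\]
so classifying bounded orbits amounts to a phase-line analysis of the quartic ``potential'' $V(s)=-[(4s^2+E)^2-4K^2]/4$.

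For sufficiency I plug each of the three families into the invariants and verify that $s_0$ sits in the compact connected component of the admissible set $\{V\le 0\}$. In case (i), $q=ib$ and $r=ic$ give $K=-bc$ and $E=-(b^2+c^2+4a^2)$, and the compact component works out to $s^2\le\bigl((|b|-|c|)^2+4a^2\bigr)/4$, which contains $s_0=a$ since $a^2\le a^2+(|b|-|c|)^2/4$; hence $s$ oscillates between two finite turning points. In cases (ii) and (iii), $r=0$ forces $K=0$ and the reduction collapses to the monotone Riccati-type equation $\dot s=-(4s^2+E)/2$. A phase-line analysis shows $s$ converges asymptotically to a finite equilibrium $\pm\tfrac12\sqrt{-E}$ provided $-E\ge 0$ and $s_0$ lies on the correct side of that equilibrium: for (ii) this is automatic, since $-E=b^2+4a^2>0$ and $\dot s|_{t=0}=b^2/2>0$, while for (iii) it translates into exactly the stated constraints $a\ge 0$ and $|b|\le 2a$. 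In each case boundedness of $s$, combined with the conserved values of $K$ and $E$, controls $q,r$ and hence all four $\lam_j$, ruling out finite-time breakdown.

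For necessity I enumerate, up to permutation (Lemma~\ref{lem:scale}), the trace-free configurations outside (i)--(iii) and show each produces finite-time blow-up via a Riccati comparison on the $s$-equation. Four distinct real eigenvalues give real nonzero $q_0,r_0$; the AM-GM inequality $q_0^2+r_0^2\ge 2|q_0r_0|$ forces $4s_0^2+E=q_0^2+r_0^2\ge 2|K|$, so $s_0$ lies in the unbounded component of $\{V\le 0\}$, where $\dot s^{\,2}\sim 4s^4$ drives $|s|\to\infty$ in finite time. A single complex conjugate pair coexisting with two distinct real eigenvalues gives $q$ purely imaginary and $r$ real nonzero, so $K^2=-|q|^2r^2<0$ and $\dot s^{\,2}\ge -K^2>0$ uniformly; $\dot s$ keeps one sign and the same Riccati comparison yields blow-up. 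Finally, within the parametrization of (iii), violating $a\ge 0$ or $|b|\le 2a$ removes the equilibrium of the reduced Riccati equation on the correct side of $s_0$, and the same comparison sends $s\to\pm\infty$ in finite time.

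The main obstacle is the bookkeeping around pairing choice for spectra with several real eigenvalues. Four distinct reals admit three inequivalent pairings, and one must check that the AM-GM obstruction to lying in the bounded component is independent of the choice, so that the blow-up conclusion is unambiguous; conversely, for family (iii) the ``natural'' pairing isolating the repeated eigenvalue is essential, since with a different pairing the compact component of the reduced $s$-equation disappears altogether. A related delicate point is the boundary case $|b|=2a>0$ in (iii), where the sink $s_*=0$ of the Riccati equation is only semi-stable: the decay $s(t)=a/(1+2at)\to 0$ is merely polynomial and must be verified by explicit integration rather than by linearization at a hyperbolic equilibrium.
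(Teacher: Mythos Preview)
Your proposal is correct and follows essentially the same route as the paper: the same pair-splitting $\lam_{1,2}=s\pm q$, $\lam_{3,4}=-s\pm r$ (the paper's $(a,b,c)$), the same two invariants $K=qr$ and $E=q^2+r^2-4s^2$ (the paper's $bc$ and $-(4a^2-b^2-c^2)$), and the same three-case breakdown according to whether $q,r$ are real or purely imaginary. The only cosmetic difference is that you package the two invariants into the single energy relation $4\dot s^{\,2}=(4s^2+E)^2-4K^2$ and run a unified one-dimensional potential/phase-line analysis, whereas the paper argues each case directly from the form of the invariant level sets and the location of the equilibria; the two presentations are equivalent and reach the same conclusions.
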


Expressed in terms of traces $m_k:= tr(M^k)$, these  initial configurations  form a ``large" sub-critical set which can be realized by  its projection on the surface $\Sigma$,
\[
\Sigma:=\{\Lam \ \big| \ 4m_4-2m_2^2- 2m_2 + 3=0, \quad m_1=0\},
\quad m_k:=\sum_{k=1}^4 \lam^k_j.
\]
\begin{thm}\label{thm:4DRE}
Solutions to (\ref{5.2}) with $n=4$ remain bounded for all time if
and only if  there exists a $r>0$ such that the initial data
$\Lambda_0:=(\lam_{10}, \lam_{20}, \lam_{30}, \lam_{40})$ lie in the
following set
\[
r \Lam_0 \in \Sigma  \cap \left[ \{|m_3| \leq 1.5(1 -m_2), m_2\leq 1
\} \cup \{m_3=1.5(m_2-1), \quad m_2 > 1\} \right].
 \]
\end{thm}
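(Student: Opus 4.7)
The plan is to recast thm:4DRE- in trace coordinates $m_k=\sum_j\lam_j^k$ via the trace-dynamics viewpoint announced in $\S 4$, together with the scaling symmetry of Lemma 1.1; the surface $\Sigma$ serves as a cross-section of the sub-critical cone, so its intersection with that cone is exactly the object the theorem describes.

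First I would derive the trace ODEs from (5.2) in $n=4$: $\dot m_1=0$, $\dot m_2=-2m_3$, $\dot m_3=-3m_4+\frac{3}{4}m_2^2$, and, via Newton's identity $m_5=\frac{5}{6}m_2 m_3$ valid when $m_1=0$, $\dot m_4=-\frac{7}{3}m_2 m_3$. A short computation shows $I_1:=m_4-\frac{7}{12}m_2^2$ is a first integral, and eliminating $m_4$ yields the planar Hamiltonian system
\[
\dot m_2=-2m_3,\qquad \dot m_3=-m_2^2-3I_1,
\]
conserving $H:=m_3^2-\frac{m_2^3}{3}-3I_1 m_2$. Thus every spectral orbit collapses, on each leaf $\{m_1=0,\,I_1=\textrm{const}\}$, to a planar Hamiltonian orbit.

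Next I would classify the bounded orbits of this planar flow. For $I_1\geq 0$ every non-stationary level curve of $H$ is unbounded, so necessarily $I_1<0$; in that regime the flow has a center at $(-m_2^s,0)$ and a saddle at $(m_2^s,0)$ with $m_2^s:=\sqrt{-3I_1}$ and separatrix level $H_s:=\frac{2}{3}(m_2^s)^3$, enclosed by a homoclinic loop around the center. Standard phase-plane analysis then yields: $(M_2,M_3)$ is sub-critical iff $H(M_2,M_3)\leq H_s$ together with either $M_2\leq m_2^s$ (lying on or inside the homoclinic loop) or $H=H_s$ and $M_3\geq 0$ (lying on the stable manifold of the saddle coming in from $+\infty$).

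Finally I would project onto $\Sigma$. Solving $\Sigma$ for $m_4$ gives $m_4=\frac{1}{4}(2m_2^2+2m_2-3)$, whence $I_1=-\frac{1}{12}(m_2-3)^2\leq 0$, $m_2^s=\frac{1}{2}|m_2-3|$, and $H_s=\frac{1}{12}|m_2-3|^3$. Expanding $(3-m_2)^3$ in the range $m_2\leq 3$ produces $12(H_s-H)=27(m_2-1)^2-12m_3^2$, so $H\leq H_s$ becomes $|m_3|\leq\frac{3}{2}|m_2-1|$ and $M_2\leq m_2^s$ becomes $m_2\leq 1$; combining these gives the first region $\{|m_3|\leq 1.5(1-m_2),\,m_2\leq 1\}$. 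The stable-manifold branch $H=H_s$, $M_3\geq 0$, $M_2>m_2^s$ reduces on $\Sigma$ to the half-line $m_3=\frac{3}{2}(m_2-1)$, yielding the second component. The main obstacle will be the normalization step: for a given $\Lam_0$ the condition $r\Lam_0\in\Sigma$ is a quadratic in $r^2$ with discriminant $-12I_1(\Lam_0)$, so a positive $r$ exists iff $I_1\leq 0$, and one must track which root of this quadratic lands in the theorem's set and cross-check against the three eigenvalue families of thm:4DRE-; in particular, case (iii) with $a>0$ admits the universal root $r^2=1/(4a^2)$ and traces $m_3=1.5(m_2-1)$ as $b/a$ ranges over $[-2,2]$, while cases (i) and (ii) project into the interior of the first region, thereby establishing the bijection between sub-critical rays and points of $\Sigma\cap(S_1\cup S_2)$.
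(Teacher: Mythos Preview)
Your proposal is correct and follows essentially the same route as the paper: derive the trace dynamics, extract the invariant $I_1=m_4-\tfrac{7}{12}m_2^2$ (the paper writes $I_1=-l^2/3$), reduce to the planar Hamiltonian system, classify bounded orbits via the saddle separatrix, and then use scaling to slice the sub-critical cone by $\Sigma$. The only cosmetic difference is that the paper carries the auxiliary parameter $l$ throughout and parametrizes $\Sigma$ by $m_2+2l=3$, whereas you compute $I_1=-\tfrac{1}{12}(m_2-3)^2$ directly from the defining equation of $\Sigma$; also, the discriminant in your normalization step is $-48I_1$ rather than $-12I_1$, but this does not affect the sign argument.
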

The set stated in Theorem \ref{thm:4DRE} is non-trivial; in fact, it
contains non-zero neighborhoods.  We note that a recent study in \cite{4d} on 4D incompressible Navier-Stokes equations suggests that the energy transfer in 4D is indeed more efficient than in 3D.




After this introduction of restricted Euler equations and the
associated spectral dynamics, we identify the 4D sub-critical initial
configurations in terms of eigenvalues in \S 2. An alternative formulation of the
spectral dynamics --- called trace dynamics is derived in \S 3. Based
on the trace dynamics we identify the critical thresholds for 3D
case in \S 4 and the 4D model in \S 5. Finally in the appendix we establish the
correspondence between the sub-critical eigenvalues and the sub-critical set
for initial traces.

\section{4D spectral dynamics}
Let  $\lambda=\lambda_i$ solve the restricted Euler equation
\begin{equation}\label{6.1-}
\frac{d}{dt}\lambda_i+ \lambda_i^2= \frac{1}{4} \sum_{j=1}^4
\lambda_j^2,  \quad
i=1\cdots 4.
\end{equation}
Two independent global invariants obtained in \cite{LT2002a} are
\begin{subequations}\label{eqs:gl12}
\begin{equation}\label{gl1}
(\lambda_1-\lambda_2)(\lambda_3-\lambda_4)=(\lambda_{10}-\lambda_{20})(\lambda_{30}-\lambda_{40})
\end{equation}
and
\begin{equation}\label{gl2}
(\lambda_1-\lambda_3)(\lambda_2-\lambda_4)=(\lambda_{10}-\lambda_{30})(\lambda_{20}-\lambda_{40})
\end{equation}
\end{subequations}
We now prove Theorem \ref{thm:4DRE-} based on these global invariants. In view of the incompressibility invariant $\sum_{j=1}^4\lambda_j=0$, we can express  the remaining three spectral degrees of freedom as
$\Lambda=(a+b, a-b, -a+c, -a-c)^\top$, where $a$ is real, $a\in \R$,  and $b$ or $c$ are either real $b,c\in \R$ or purely imaginary, $b,c \in i\R$.
The two global invariants  (\ref{eqs:gl12}) now read
\begin{subequations}\label{eqs:abc}
\begin{equation}\label{bc}
bc = b_0c_0
\end{equation}
and
\begin{equation}\label{abc}
4a^2-b^2-c^2=4a_0^2-b_0^2-c_0^2.
\end{equation}
\end{subequations}
The spectral dynamics (\ref{6.1-}) amounts to the $3\times 3$ closed system,
\begin{subequations}\label{eqs:a}
\begin{align}\label{a}
\frac{d}{dt} a & = -\frac{1}{2}b^2 +\frac{1}{2}c^2,\\ \label{b}
\frac{d}{dt} b&=-2ab,\\ \label{c}
\frac{d}{dt}c &=-2ac,
\end{align}
\end{subequations}
subject to initial data $(a_0, b_0, c_0)$.
Observe that both $b=0$ and $c=0$ are global invariants, thus the only equilibrium points of (\ref{eqs:a}) when $b_0c_0 \not=0$ lie along the curves $(0,b^*,c^*), \ b^*=\pm c^*$.
From (\ref{b}) and (\ref{c}) it is clear that if either $b_0$ or $c_0$ are purely imaginary then they remain so for all time. Thus, we need to  discuss three cases in order.

\{i\} Two pairs of complex eigenvalues, $a_0\pm ib_0$ and $-a_0\pm ic_0$
with $b_0c_0\neq 0$. Setting  $(a, b, c) \mapsto (a, ib, ic)$ in (\ref{abc}) we obtain the global invariant
$$
4a^2+b^2+c^2=4a_0^2+b_0^2+c_0^2.
$$
In this case, \emph{all} trajectories remain bounded for all time. \\

{\{ii\} One pair of complex eigenvalues, $a_0 \pm ib_0, \ b_0 \neq 0$ and $-a_0\pm c_0$. Setting  $(a, b, c) \mapsto (a, ib, c)$ in (\ref{abc}), then the global invariant (\ref{abc}) becomes
\[
4a^2+b^2-c^2=4a_0^2+b_0^2-c_0^2.
\]
We distinguish between two cases. If $c_0=0$, then by (\ref{c}) $c(t)\equiv 0$, and the reduced global invariant, $4a^2+b^2=4a_0^2+b_0^2$, implies that both $a$ and $b$ remain bounded for all $t>0$.  
If $c_0 \not =0$, then the equation (\ref{a}) becomes
$$
\frac{d}{dt} a=\frac{1}{2} \left(b^2 +\frac{(b_0c_0)^2}{b^2} \right),
$$ 
this shows that no finite equilibrium point of the system (\ref{a})-(\ref{c}) is stable, which excludes the possibility of
a globally bounded solution when $b_0c_0\not=0$. 

{\{iii\} Two real eigenvalues, $a_0\pm b_0$ and $-a_0\pm c_0$. Again, we distinguish between two cases. Assume that two initial eigenvalues coincide, say $c_0=0$ (if $b_0= 0$, we end up with a similar scenario which amounts to a permutation of the $c_0=0$ case). Then $c(t)\equiv 0$ and the  remaining $(a, b)$ satisfy the reduced $2\times 2$ system
\[
\frac{d}{dt} \left(\begin{array}{c}a\\b\end{array}\right)
= \left(\begin{array}{c}-b^2/2\\ -2ab\end{array}\right),
\]
with the corresponding global invariant  $4a^2-b^2=4a_0^2-b_0^2$. Now,  since $\frac{d}{dt}a=-b^2/2\leq 0$, it follows that  $a>0$ is decreasing  while $b$ must approach the stable equilibrium points $(a^*>0,0)$ along the positive $a$-axis, as $\frac{d}{dt}b=-2ab$ has the opposite sign of $b$. Thus, trajectories remain bounded in the invariant  sector $|b_0|\leq 2a_0$.

Finally, assume no pair of initial eigenvalues coincide, $b_0c_0 \not=0$. Then, since the global invariants (\ref{bc}) and (\ref{abc}) are not compact, the only possible bounded solutions are those converging to the equilibrium points $(0, \pm c^*, c^*)$. But when substituted into both (\ref{bc}) and (\ref{abc}), this implies
\[
4a_0^2=(b_0 \pm c_0)^2,
\]
which is satisfied only if at least one pair of initial eigenvalues coincide, i.e. $a_0 \pm b_0=-a_0 \pm c_0$.
We conclude that for real eigenvalues, only those of the form $(a_0+b_0, a_0-b_0, -a_0, -a_0)$ with $|b_0|\leq 2a_0, a_0\geq 0$ lead to global bounded solution.

\section{Trace dynamics}
This section is devoted to an alternative formulation of the
spectral dynamics in terms  of real quantities $m_k:=\sum_{j=1}^n
\lambda_j^k, \quad k=1, \cdots, n$, where $\lambda=\lambda_i$
solves the restricted Euler equation
\begin{equation}\label{6.1}
\frac{d}{dt}\lambda_i+ \lambda_i^2= \frac{1}{n} \sum_{j=1}^n
\lambda_j^2,  \quad
i=1\cdots n.
\end{equation}
This is motivated by the trace dynamics originally studied in
\cite{Vie1982} for $n=3$. The use of trace dynamics enables us to obtain an explicit description of the critical threshold surface for initial configurations.

Here we seek an extension for the general $n$-dimensional setting,
which is summarized in the following

\begin{lem}\cite{LT2002a} \label{trace}
Consider the $n$-dimensional  restricted Euler system (\ref{6.1})
subject to the incompressibility condition $m_1:=\sum_{j=1}^n
\lambda_j = 0$. Then the traces $m_k$ for $k=2,  \cdots, n$
satisfy a closed dynamical system, see (\ref{9.2})-(\ref{9.5}) below, which governs the local topology of the
restricted  flow.
\end{lem}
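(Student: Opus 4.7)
The plan is straightforward: differentiate each power sum $m_k=\sum_{j=1}^n \lam_j^k$ along the flow using the scalar ODE (\ref{6.1}), and then show that the resulting right-hand sides can be expressed purely as polynomials in $m_2,\ldots,m_n$.

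First I would compute, directly from (\ref{6.1}),
\begin{equation*}
\frac{dm_k}{dt} = k\sum_{j=1}^n \lam_j^{k-1}\frac{d\lam_j}{dt}
= k\sum_{j=1}^n \lam_j^{k-1}\Bigl(-\lam_j^2+\tfrac{m_2}{n}\Bigr)
= -k\, m_{k+1} + \tfrac{k\,m_2}{n}\,m_{k-1},\qquad k\ge 1.
\end{equation*}
For $k=1$ this collapses to $dm_1/dt=-m_2+(m_2/n)\cdot n=0$, confirming that incompressibility $m_1\equiv 0$ is automatically preserved. For $k=2,\ldots,n-1$ the right-hand side already involves only $m_2,\ldots,m_n$, so this block of equations is autonomous in the desired variables.

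The only real obstacle is the $k=n$ equation, whose right-hand side $-n\,m_{n+1}+m_2\,m_{n-1}$ involves the ``extra'' trace $m_{n+1}$. I would close it via the Cayley--Hamilton theorem: since the $\lam_j$ are the $n$ roots of a common characteristic polynomial $\lam^n-e_1\lam^{n-1}+e_2\lam^{n-2}-\cdots+(-1)^n e_n$, multiplying by $\lam$ and summing over $j$ yields the Newton recurrence
\begin{equation*}
m_{n+1} \;=\; e_1 m_n - e_2 m_{n-1} + \cdots + (-1)^{n-1} e_n m_1.
\end{equation*}
Newton's identities then express each elementary symmetric function $e_\ell$ as a universal polynomial in $m_1,\ldots,m_\ell$; enforcing $m_1=0$ (hence $e_1=0$) reduces $m_{n+1}$ to an explicit polynomial $P_n(m_2,\ldots,m_n)$. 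Substituting $m_{n+1}=P_n(m_2,\ldots,m_n)$ into the $k=n$ equation produces the closed polynomial ODE system in $(m_2,\ldots,m_n)$ announced in (\ref{9.2})--(\ref{9.5}).

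The main difficulty is pure bookkeeping: tracking the alternating signs and factorial weights in the Newton relations. As a sanity check I would verify the small-$n$ cases. For $n=3$, Newton's identities with $m_1=0$ give $e_2=-m_2/2$ and $e_3=m_3/3$, whence $m_4=-e_2 m_2=m_2^2/2$, and one recovers Vieillefosse's classical system in $(m_2,m_3)$. For $n=4$, the same manipulations give $m_5 = -e_2 m_3 + e_3 m_2 = \tfrac{5}{6}m_2 m_3$, and substituting into $dm_4/dt=-4m_5+m_2 m_3$ delivers the $3\times 3$ polynomial system in $(m_2,m_3,m_4)$ that is used throughout \S 5.
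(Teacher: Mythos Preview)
Your argument is correct and follows essentially the same route as the paper: differentiate $m_k$ along the flow to obtain $\dot m_k=-k\,m_{k+1}+\tfrac{k}{n}m_2 m_{k-1}$, and then close the $k=n$ equation by eliminating $m_{n+1}$ via the characteristic polynomial, which the paper writes with coefficients $q_\ell$ (your $(-1)^\ell e_\ell$) and reduces in the same way using Newton's identities. The only cosmetic difference is that you name Cayley--Hamilton and Newton explicitly and append the $n=3,4$ sanity checks, while the paper simply asserts that the $q_\ell$ are polynomials in $m_1,\ldots,m_n$.
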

\begin{proof}
Based on the spectral dynamics  the evolution  equation for each
eigenvalue $\lambda_i$ can be written as
$$
\frac{d}{dt} \lambda_i +\lambda_i^2=\frac{1}{n}m_2, \quad
i=1\cdots n.
$$
By multiplying $k\lambda_i^{k-1}$ and summation over $i$ we obtain
$$
\frac{d}{dt} m_k + k m_{k+1} = \frac{k}{n}m_2m_{k-1}, \quad
k=2\cdots n.
$$
Note that $m_1=0$ we have
\begin{align}\label{9.2}
&  \frac{d}{dt} m_2+ 2m_3=0,\\\label{9.3}
& \frac{d}{dt} m_3  + 3m_4  = \frac{3}{n}  m_2^2,\\
& \cdots \notag \\
\label{n+1} & \frac{d}{dt} m_n  +nm_{n+1}= m_{n-1}m_2.
\end{align}
To close the system, it remains to express $m_{n+1}$ in terms of
$(m_1, \cdots, m_n)$. To this end  we  utilize the characteristic
polynomial
\begin{equation}\label{CH}
\lambda_j^n  +  q_1\lambda_j^{n-1} + \cdots q_{n-1} \lambda_j +
q_n = 0,
\end{equation}
expressed in terms of the characteristic coefficients
$$
q_1 = -m_1 =0,  \quad q_2=-\frac{1}{2}m_2,  \quad q_3=-m_3/3,\quad
q_4=-m_4/4 +m_2^2/8, \quad \cdots.
$$
Note that the $q$'s can be expressed in terms of $(m_1, \cdots,
m_n)$. Using  (\ref{CH}) one may reduce $m_{n+1}$ in (\ref{n+1})
to lower-order products. In fact, $\sum_{j=1}^n (\lambda_j \times
(\ref{CH})_j)$ gives
\begin{equation}\label{9.5}
 m_{n+1} + q_2m_{n-1} +\cdots +q_{n-1}m_2=0.
\end{equation}
Substitution into (\ref{n+1}) yields the closed  system we sought
for.
\end{proof}

We demonstrate the above procedure by considering the  two examples
of 3D and 4D critical thresholds.

\section{3D critical thresholds: finite time blowup}
This section is devoted to the study of the 3D critical thresholds,
see \cite{Vie1982, Can1992}. In the three dimensional case one has
$$
q_1 = 0, \quad  q_2=-\frac{1}{2}m_2, \quad q_3=\prod_{j=1}^n
\lambda_j =-\frac{1}{3}m_3,
$$
hence
$$
\lambda_i^3 -\frac{1}{2}m_2 \lambda_i -\frac{1}{3}m_3=0, \quad
i=1, 2, 3.
$$
Multiplying by $\lambda_i$ and taking the summation over $i$ we
find
$$
m_4=\frac{1}{2}m_2^2.
$$
Thus a  closed system is obtained,
\begin{align}\label{3_1}
& \frac{d}{dt} m_2 + 2m_3=0,  \\ \label{3_2} &  \frac{d}{dt} m_3  +
\frac{1}{2} m_2^2=0.
\end{align}
From (\ref{3_1})-(\ref{3_2}) it follows that
$$
\frac{d}{dt}[6m_3^2-m_2^3]=6m_3 \frac{d}{dt} m_3 -3m_2^2 \frac{d}{dt} m_2 =0,
$$
which yields a global invariant
$$
6m_3^2- m_2^3={\rm Const.}
$$
We consider the phase plane $(m_2, m_3)$, except for the
separatrix $6m_3^2=m_2^3$, all other solutions would not approach
the origin. The phase plane is divided into two parts by this
separatrix. The nonlinearity ensures that trajectories which do
not pass the origin must lead to infinity at finite time. In fact
for initial data from the region $\{(m_2, m_3), \quad m_2
> \sqrt[3]{6} \; m_3^{2/3}\}$, the corresponding trajectories will
remain in this region since the system (\ref{3_1}), (\ref{3_2}) is
autonomous. Therefore (\ref{3_2}) leads to
\begin{equation}\label{m33}
\frac{d}{dt} m_3 <-\frac{1}{2}\sqrt[3]{36}\;  m_3^{4/3}.
\end{equation}
Since $\frac{d}{dt} m_3=-\frac{1}{2}m_2^2$, $m_3(t)$ is always decreasing
in time. Even for positive $m_3(0)$, there exists a finite time
$T^*$ such that $m_3(T*)<0$. The integration of (\ref{m33}) over
$[T^*, t)$ gives
$$
m_3(t)<\left[\frac{3}{2}\sqrt[3]{36}(t-T^*) +m_3(T^*)^{-1/3}
\right]^{-3}.
$$
This shows that $m_3(t) \to -\infty$ when $t$ approaches a time
before
$$
T^* +\frac{2}{3\sqrt[3]{36}}(-m_3(T^*))^{-1/3}.
$$
Finite time breakdown can be similarly justified for initial data
lying in the region  $\{(m_2, m_3), \quad m_2 < \sqrt[3]{6} \;
m_3^{2/3}\}$. These facts enable us to conclude the following
\begin{thm} Consider the system (\ref{3_1})-(\ref{3_2}) with initial
data $(m_2(0), m_3(0))$. The global bounded solution exists if and
only if the initial data lie on the curve
$$
\left\{ (m_2, m_3) \Big| \quad   m_3=\frac{1}{\sqrt{6}}m_2^{3/2}
\right\}.
$$
\end{thm}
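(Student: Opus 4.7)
The plan is to combine the first integral $6m_3^2 - m_2^3 = \text{const}$ derived just above with an orbit-by-orbit analysis of the planar autonomous flow generated by (\ref{3_1})--(\ref{3_2}). Every globally bounded trajectory must sit on the zero level set $6m_3^2 = m_2^3$ (the separatrix), since off that set the argument in the paragraph preceding the theorem produces a Riccati-type differential inequality $\frac{d}{dt} m_3 < -c\, m_3^{4/3}$ (or the symmetric inequality for $m_2$ in the opposite region $\{m_2 < \sqrt[3]{6}\, m_3^{2/3}\}$) whose integration forces blowup in finite time. I would only briefly spell out the second, symmetric case, since it is a mirror image of what has already been done.

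It then remains to analyze the separatrix itself. The level set $6m_3^2 = m_2^3$ forces $m_2 \geq 0$ and splits into two branches $m_3 = \pm \frac{1}{\sqrt{6}}\, m_2^{3/2}$, both invariant under the flow. Substituting each branch into $\frac{d}{dt} m_2 = -2m_3$ reduces the dynamics to a scalar Bernoulli-type ODE
\[
\frac{d}{dt} m_2 = \mp \frac{2}{\sqrt{6}}\, m_2^{3/2}.
\]
On the $+$ branch this equation is dissipative and an elementary separation of variables gives $m_2(t) = \bigl(m_2(0)^{-1/2} + t/\sqrt{6}\bigr)^{-2}$, which is global in time and decays monotonically to $0$; the associated $m_3(t)$ is thus also bounded. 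On the $-$ branch the same integration yields $m_2(t) = \bigl(m_2(0)^{-1/2} - t/\sqrt{6}\bigr)^{-2}$, which blows up at $t^\star = \sqrt{6}\, m_2(0)^{-1/2} < \infty$. Consequently the only globally bounded solutions of (\ref{3_1})--(\ref{3_2}) are those starting on $m_3 = m_2^{3/2}/\sqrt{6}$, which is exactly the curve stated in the theorem; the origin is recovered as the unique equilibrium on this branch.

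The main obstacle I anticipate is the sign analysis on the separatrix. The invariant curve $6m_3^2 = m_2^3$ has two symmetric branches, and one might naively expect both to yield bounded orbits from the symmetry of the conserved quantity; the key observation is that the evolution $\frac{d}{dt} m_2 = -2m_3$ is odd in $m_3$, so only the branch with $m_3 \geq 0$ is dissipative under the reduced scalar flow, while its mirror branch is explosive. Once this distinction is in place, the finite-time blowup on and off the separatrix follows from the Bernoulli-type integrations above together with the Riccati estimate already recorded in the paragraph preceding the theorem, closing the ``only if'' direction and verifying the ``if'' direction simultaneously.
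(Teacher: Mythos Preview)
Your proposal is correct and follows the same route as the paper: the conserved quantity $6m_3^2-m_2^3$ identifies the separatrix, and off it the Riccati-type inequality (\ref{m33}) (together with its mirror version) forces finite-time blowup. The only addition you make is the explicit Bernoulli integration on the two separatrix branches $m_3=\pm m_2^{3/2}/\sqrt{6}$, showing that the $+$ branch relaxes to the origin while the $-$ branch blows up; the paper states the theorem with the $+$ branch but does not spell out this on-separatrix dichotomy, so your argument is in fact a shade more complete than the paper's own presentation.
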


We now turn to interpret this condition in terms of the
eigenvalues. Set $\Lambda=(\lambda_1, \lambda_2, \lambda_3)$, the
above critical stable set can be written as
$$
\Omega=\Big\{\Lambda \Big|\quad \sum_{k=1}^3 \lambda_k ^3=
\frac{1}{\sqrt{6}} \left( \sum_{k=1}^3 \lambda_k ^2 \right)^{3/2},
\quad \sum_{k=1}^3 \lambda_k =0 \Big\}.
$$
The homogeneity of the above constraint in terms of eigenvalues
implies that if $\Lambda \in \Omega$, then $r \Lambda \in
\Omega \quad \forall r>0$.

Without loss of generality we consider the restriction of $\Omega$
onto a ball $\sum_{k=1}^3 \lambda_k ^2=r^2$, denoted by
$\Omega(r)$. There are two cases to be considered:

The initial eigenvalues contain complex components, say
$\Lambda_0=(a-bi, a+bi, c)$ for real $a, b, c \in \R$. The
restricted set $\Omega(\sqrt{6})$ is determined by
$$
c+2a=0, \quad 2a^2-2b^2+c^2=6, \quad
2a(a^2-3b^2)+c^3=r^3/\sqrt{6}=6.
$$
Eliminating $c$ we have
$$
6a^2-2b^2=6, \quad -6a(a^2+b^2)=6 \Rightarrow 4a^3-3a+1=(a+1)(2a-1)^2=0,
$$
which has real roots  $a\in \{-1, 0.5, 0.5\}$, from which no real
$b\not= 0$ can be found.

The only possible scenario is the real eigenvalue $\Lambda_0=(a, b,
c)\in \R^3$. Restriction again on $\Omega(\sqrt{6})$ we have
$$
a+b+c=0, \quad a^2+b^2+c^2=6, \quad a^3+b^3+c^3=r^3/\sqrt{6}=6.
$$
Eliminating $a, b$ we have $c^3-3c-2=0$ with real roots $c\in \{2,
-1, -1\}$. The symmetric property implies that $a, b$ also lie in
the set $\{2, -1, -1\}$. In short one has
$$
\Omega(\sqrt{6})=\{\Lambda| (-1, -1, 2), (-1, 2, -1), (2, -1,
-1)\}.
$$
This when combined with the above scaling property leads to the
result stated in Theorem 1.1.

\section{4D critical thresholds: global regularity}

In the four dimensional case one has
$$
q_1 = 0, \quad  q_2=-\frac{1}{2}m_2, \quad q_3=
-\frac{1}{3}m_3,\quad q_4= -\frac{m_4}{4}  + \frac{m_2^2}{8}.
$$
Hence
$$
\lambda_i^4 -\frac{1}{2}m_2 \lambda_i^2  -\frac{1}{3}m_3\lambda_i
-\frac{m_4}{4} +\frac{m_2^2}{8} =0, \quad i=1\cdots 4.
$$
Multiplying by $\lambda_i$ and taking the summation we obtain
$$
m_5=\frac{1}{2}m_2m_3 + \frac{1}{3}m_3m_2=\frac{5}{6}m_2m_3.
$$
Therefore the resulting closed system becomes
\begin{align}\label{m1}
& \frac{d}{dt} m_2= - 2m_3,   \\ \label{m2} & \frac{d}{dt} m_3=
\frac{3}{4}  m_2^2 -3m_4, \\ \label{m3}
 & \frac{d}{dt} m_4= -\frac{7}{3}m_3m_2.
\end{align}
From (\ref{m1}) and (\ref{m3}) it follows that
$$
\frac{d}{dt}\left( m_4 -\frac{7}{12}m_2^2 \right)=0 ,
$$
which gives a global invariant
which gives the global invariant
\begin{equation}\label{eq:1stvar}
m_4- \frac{7}{12}m_2^2 =m_{40}-\frac{7}{12}m_{20}^2.
\end{equation}
Substitution of this into (\ref{m2}) leads to
$$
\frac{d}{dt}{m}_3=-m_2^2 -3(m_4-\frac{7}{12}m_2^2).
$$
In order to ensure global bounded solution (excluding globally decreasing $m_3$) it is necessary to
consider trajectories for which
\begin{equation}\label{aa}
m_4(t)- \frac{7}{12}m_2^2(t)=-\frac{l^2}{3}, \quad l>0.
\end{equation}
We thus have a closed system for
$(m_2, m_3)$
\begin{equation}\label{m23}
\frac{d}{dt}{m}_2=-2m_3, \quad \frac{d}{dt}{m}_3=-m_2^2+l^2
\end{equation}
with a moving parameter $l$ determined by (\ref{aa}) with $t=0$.
This system has two critical points $(-l, 0)$ and $(l, 0)$; it is
easy to verify that  as equilibrium points of system (\ref{m23}),
$(-l, 0)$ is a spiral and $(l, 0)$ is a saddle for the
corresponding linearized system.

This structure suggests that part of  separatrix' of this system
may serve as the critical threshold. Note that
$$
\frac{d}{dt}\left(3m_3^2-m_2^3+3l^2m_2 \right)
=6m_3\frac{d}{dt}{m}_3-3m_2^2\frac{d}{dt}{m}_2+3l^2\frac{d}{dt}{m}_2=0.
$$
Thus the 2nd global invariant when passing $(m_2, m_3)=(l, 0)$ becomes
$$
3m_3^2-m_2^3 +3l^2m_2=2l^3,
$$
yielding two separatrixes
\begin{equation}\label{eq:2ndvar}
3m_3^2=m_2^3 -3l^2m_2 +2l^3=(m_2+2l)(m_2-l)^2.
\end{equation}
We note in passing that the two  invariants (\ref{eq:1stvar}),(\ref{eq:2ndvar}) are
in fact the same spectral invariants we had before in (\ref{eqs:abc}),
which are now reformulated in terms of the traces $m_2$ and $m_3$.
Thus, for example, the straightforward identity
\[
12 m_4- 7m_2^2 \equiv -4(4a^2-b^2-c^2)^2 -48(bc)^2,
\]
reveals the relation betwen the trace-based invariant (\ref{eq:1stvar}) and the spectral invariant (\ref{abc}).

In the phase plane $(m_2, m_3)$, this consists of a closed curve
for $-2l\leq m_2\leq l$ and two open branches for $m_2>l$.  The
phase plane analysis suggests that the global bounded solution
exists if and only if  the initial data satisfy (\ref{aa}) and
$$
(m_2, m_3)(0)\in \Gamma,
$$
where
\begin{align*}
\Gamma:=\Big\{ (m_2, m_3)| \quad |m_3| & \leq
\frac{l-m_2}{\sqrt{3}}\sqrt{m_2+2l}, \quad -2l\leq m_2\leq l
\Big\}
\\
&  \cup \left\{ m_3=\frac{m_2-l}{\sqrt{3}}\sqrt{m_2+2l},  \quad
m_2>l \right\}
\end{align*}
and the moving parameter $l$ is determined by (\ref{aa}). Also we
can show that if initial data do not belong to $\Gamma$ the
solution becomes unbounded in finite time.

\begin{figure}[htbp]\label{fig:sep}
\begin{center}
\includegraphics[height=0.35\textheight,width=0.8\textwidth]{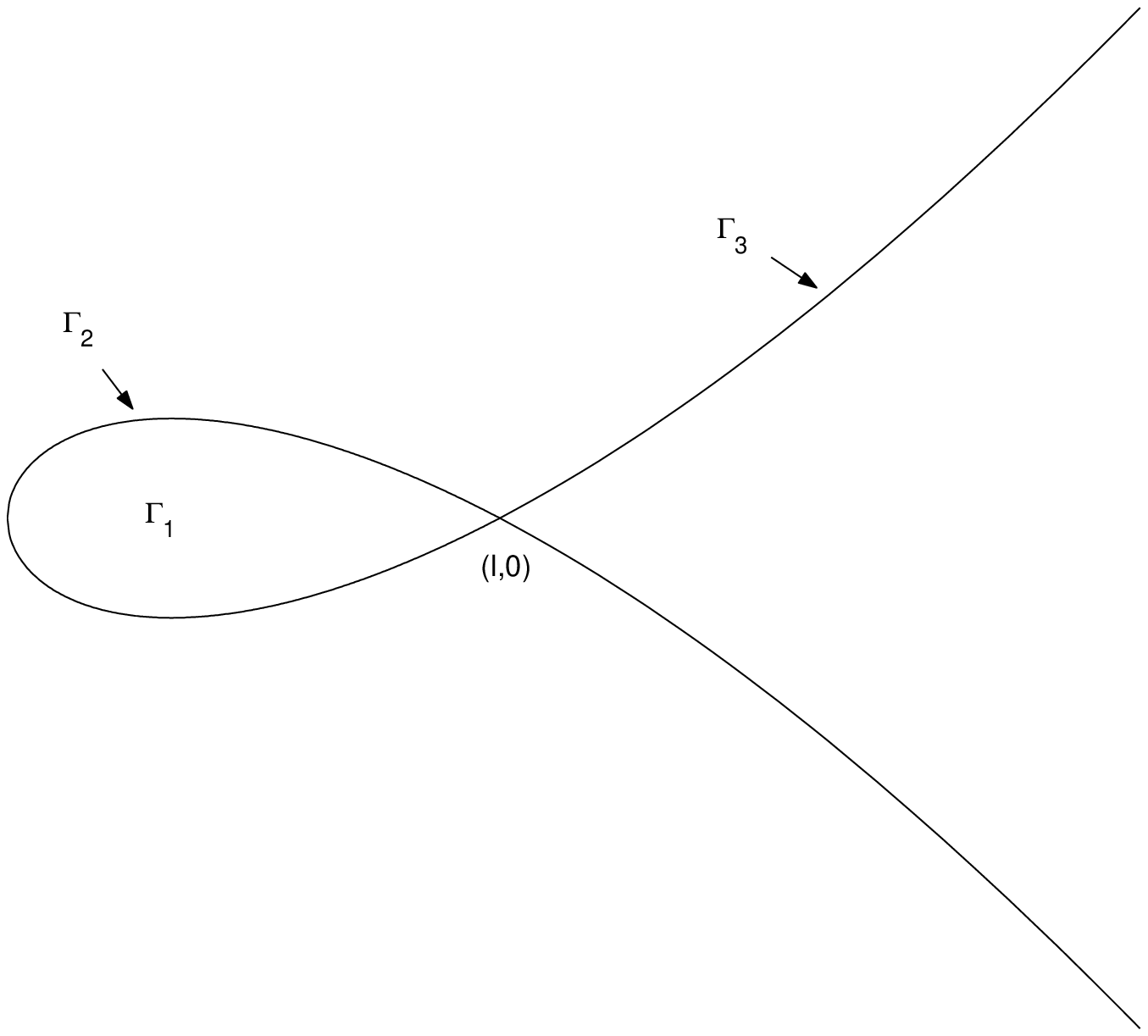}
\end{center}
\end{figure}


This figure depicts trajectories for system (\ref{m23}) in $(m_2, m_3)$ plane, which
contain the boundary of the non-trivial set $\Gamma$ with
(\ref{aa}).

From above analysis it follows that the
solutions remain bounded for all time if and only if
the initial data $\Lambda_0$ lies in the following set
\begin{equation}\label{sg}
\Lambda_0 \in \cup_{l> 0} \{S\cap\Gamma\},
\end{equation}
where $$\Gamma:=\Gamma_1 \cup \Gamma_2 \cup \Gamma_3,$$
$$S:=\{\Lambda \; | \;
m_4-\frac{7}{12}m_2^2=-\frac{l^2}{3}\},$$
$$\Gamma_1:=\{(m_2,m_3) \; | \; |m_3|<
\frac{l-m_2}{\sqrt{3}}\sqrt{m_2+2l},\; -2l\leq m_2<l \},$$
$$\Gamma_2:=\{(m_2,m_3) \; | \; |m_3|=
\frac{l-m_2}{\sqrt{3}}\sqrt{m_2+2l},\; -2l\leq m_2 < l\},$$
$$\Gamma_3:=\{(m_2,m_3) \; | \; m_3=
\frac{m_2-l}{\sqrt{3}}\sqrt{m_2+2l},\;  m_2\geq l\}.$$\\

We now turn to Theorem \ref{thm:4DRE}.
Let $r>0$ be a moving parameter, we restrict attention to the parameterized surface
$m_2+2l=3r^2$. Clearly the constraint $m_2\geq -2l$ is ensured for
any real $r$. For any $r>0$, the set $S$ restricted on this surface
is represented as
$$
 4m_4-2m_2^2- 2  m_2r^2 + 3r^4=0.
$$
This is a parabolic cylinder in the space $(m_2, m_3, m_4)$.
Applying the scaling property stated in Lemma 1.1, we may set $r=1$,
and denote the set $S$ with constraint $m_2+2l=3$ and $m_1=0$ as
\[
\Sigma:=\{\Lam \ \big| \ 4m_4-2m_2^2- 2  m_2 + 3=0, \quad m_1=0\},
\quad m_k:=\sum_{k=1}^4 \lam^k_j.
\]
The first half of the set $\Gamma|_{\Sigma}$ is supported where $-2l
\leq m_2 \leq l$, together with $m_2+2l=3$, i.e., $l=\frac{3-m_2}{2}$, leading to $m_2\leq 1 $.
In this case, the restriction
$$
|m_3| \leq {l-m_2}=\frac{3-m_2}{2}-m_2=1.5(1-m_2),
$$ yields
\[
\Omega_1= \big\{(m_2,m_3) \ \big| \ |m_3| \leq 1.5(1-m_2), \quad m_2
\leq 1 \big\}.
\]
For reals $\Lambda$, $m_2 \geq 0$; the fact of no lower bound for
$m_2$ suggests that any complex eigenvalue with zero divergence may
well lie in $\Sigma \cap \Omega_1$.

The second $\Gamma|_{\Sigma}$-constraint, supported on $m_3=m_2-l$
requiring $m_2> l=\frac{3-m_2}{2}$, i.e., $m_2 >1 $ leading to
\[
\Omega_2 = \big\{ (m_2,m_3) \ \big| \ m_3=1.5(m_2-1),  \quad m_2
> 1 \big\}.
\]
The above set $\Sigma \cap [\Omega_1 \cup\Omega_2]$ is `fat'. Note the
3D case is similar to the special case $l=0$ which restricts to a
large subcritical set. \\

\section{Appendix}
 Finally we turn to interpretation of eigenvalues presented in Theorem \ref{thm:4DRE-} in terms of the subcritical sets in Theorem \ref{thm:4DRE}, or the equivalent set (\ref{sg}).
\vskip 0.5cm
\noindent{\{i\} Two pairs of complex eigenvalues.} The eigenvalues must be
$\lambda_1=a+bi,\lambda_2=a-bi,\lambda_3=-a+ci,\lambda_4=-a-ci$,
where $a,b,c\in \mathbb{R}$ and $b\neq 0$, $c\neq 0.$ A direct calculation gives
$$ m_2=4a^2-2b^2-2c^2, $$
$$ m_3=6a(c^2-b^2), $$
$$ m_4=4a^4+2b^4+2c^4-12a^2(b^2+c^2), $$
$$ m_2^2-l^2=3(m_4-\frac14m_2^2)=3(b^2-c^2)^2-24a^2(b^2+c^2),$$
$$ l^2=(4a^2+b^2+c^2)^2+12b^2c^2.$$
It follows that $-2l \leq m_2 \leq l$ and $l>4a^2+b^2+c^2.$ Then
$$\frac{l-m_2}{\sqrt{3}}\sqrt{m_2+2l}>\frac{3(b^2+c^2)}{\sqrt{3}}\sqrt{12a^2}=6|a|(b^2+c^2)>|m_3|.$$
Thus we know $S \cap \Gamma=S\cap\Gamma_1.$

\vskip 0.5cm
\noindent{\{ii\} One pair of complex eigenvalues and two real
eigenvalues.} The four eigenvalues must be
$\lambda_1=a+bi,\lambda_2=a-bi,\lambda_3=-a+c,\lambda_4=-a-c$,
where $a,b,c\in \mathbb{R}$ and $b\neq 0.$ Changing $c$ in Case II to $-ci$ we immediately obtain
$$ m_2=4a^2-2b^2+2c^2, $$
$$ m_3=-6a(b^2+c^2), $$
$$ m_4=4a^4+2b^4+2c^4-12a^2(b^2-c^2), $$
$$ m_2^2-l^2=3(m_4-\frac14m_2^2)=3(b^2+c^2)^2-24a^2(b^2-c^2),$$
$$ l^2=(4a^2+b^2-c^2)^2-12b^2c^2.$$
Suppose $-2l\leq m_2 \leq l$ for $l>0$, we distinguish two cases:  \\
(1) If $4a^2+b^2-c^2\geq 0$, then $l\leq 4a^2+b^2-c^2$, and
$$\frac{l-m_2}{\sqrt{3}}\sqrt{m_2+2l}\leq \frac{3(b^2-c^2)}{\sqrt{3}}\sqrt{12a^2}=6|a(b^2-c^2)|\leq|m_3|.$$
It becomes an equality if and only if $c=0.$ \\
(2) If $4a^2+b^2-c^2<0,$ then $l<c^2-4a^2-b^2,$ and
$l-m_2=b^2-c^2-8a^2<0.$ It's a contradiction with $m_2<l.$ For $m_2>l$,
the constraint  $m_3=\frac{m_2-l}{\sqrt{3}}\sqrt{m_2+2l}$ leads to the relation (\ref{p0}), i.e.,
$$ p=\left(3m_3^2-m_2^3-9m_2\left(m_4-\frac{7}{12}m_2^2\right)\right)^2+108\left(m_4-\frac{7}{12}m_2^2\right)^3=0.$$
Calculation shows that $p=432b^2c^2(b^2+(2a-c)^2)^2(b^2+(2a+c)^2)^2.$ So $p=0$ if and only if $c=0.$
Thus we know that $S\cap\Gamma=S\cap \Gamma_2$, and two real eigenvalues must be equal.
\vskip 0.5cm
\noindent{\{iii\} all the eigenvalues are real}.
Suppose the four real eigenvalues are $a,b,c$ and $-(a+b+c)$, $a,b,c\in
\mathbb{R}$, then $m_2\geq 0$.  From the set $S$ in (\ref{sg}) it follows
$$
\frac{1}{3}( m_2^2-l^2)=m_4-\frac14m_2^2\geq 0,
$$
here we have used the inequality $(\alpha+\beta+\gamma+\delta)^2\leq 4
(\alpha^2+\beta^2+\gamma^2+\delta^2)$. These together lead to  $m_2 \geq l$.
Thus if all the eigenvalues are real, then $S\cap \Gamma=S\cap
\Gamma_3.$ \\
Because of the homogeneousness, we can assume the four real eigenvalues are $ 1+s,-1+w,-1$ and $1-s-w$(if $\Lambda_0\neq 0$). Let's do the following calculation.\\
From  $m_3=\frac{m_2-l}{\sqrt{3}}\sqrt{m_2+2l}$ it follows that
$$ 3m_3^2=(m_2-l)^2(m_2+2l)=m_2^3-3m_2l^2+2l^3,$$
$$ \Rightarrow [3m_3^2-m_2^3+3m_2l^2]^2= 4(l^2)^3.$$
Using $l^2=-3(m_4-\frac{7}{12}m_2^2)$ we have
\begin{equation}\label{p0}
     p:=\left(3m_3^2-m_2^3-9m_2\left(m_4-\frac{7}{12}m_2^2\right)\right)^2+108
     \left(m_4-\frac{7}{12}m_2^2\right)^3=0.
\end{equation}
Calculation shows that
$$p=-27(s+2)^2w^2(s-w+2)^2(2s+w)^2(s+2w-2)^2(s+w-2)^2.$$
So $p=0$ if and only if
$s=-2$ or $w=0$ or $s-w+2=0$ or $2s+w=0$ or $s+2w-2=0$ or $s+w-2=0.$ They are all the same if we consider the
homogeneousness and permutation. Now we know that the four eigenvalues must be in the form $r(1+s,-1,-1,1-s).$
We claim that the range for $s$ is $[-2,2].$ \\
(i)For $-2 \leq s \leq 2$, it's easy to check that $\Lambda \in S\cap \Gamma_3.$\\
(ii)For $s>2$ or $s<-2$, we calculate $l,m_2,m_3$ to obtain
$$ l={s}^2-4,\quad m_2=4+2{s}^2,\quad m_3=6{s}^2. $$
So $$\frac{m_2-l}{\sqrt{3}}\sqrt{m_2+2l}=\frac{{s}^2+8}{\sqrt{3}}\sqrt{4{s}^2-4}=\frac{2({s}^2+8)}{\sqrt{3}}
\sqrt{{s}^2-1}.$$
Hence
 $$
 \left[ \frac{m_2-l}{\sqrt{3}}\sqrt{m_2+2l}\right]^2 -m_3^2=\frac34 ({s}^2-4)^3>0.$$
We now can conclude that if all the eigenvalues are real, then $S\cap \Gamma=S\cap
\Gamma_3,$ and $\Lambda_0$ must be in the form
$r(1+s,1-s -1,-1,)$ (plus arbitrary permutation), where $-2<s<2$ and $r>0$.

\end{document}